%
\documentclass[runningheads]{llncs}
\usepackage{amssymb}
\usepackage{amsmath}
\usepackage{graphicx} 
\newcommand\bigsubset[1][1.19]{%
 \mathrel{\vcenter{\hbox{\scalebox{#1}{$\subset$}}}}}
\usepackage{booktabs}

\usepackage[T1]{fontenc}
%
\usepackage{graphicx}
%
%
\begin{document}
\title{Chance Constrained Optimization with Complex Variables}
%
%
\author{Raneem Madani\inst{1} \and
Abdel Lisser\inst{1}}
\authorrunning{R. Madani, A. Lisser}
%
\institute{Université Paris-Saclay, CNRS, CentraleSupélec, Laboratoire des Signaux et Systèmes (L2S), 91190, Gif-sur-Yvette, France.\\
\email{\{raneem.madani,abdel.lisser\}@centralesupelec.fr}}
\maketitle              
\begin{abstract}
Optimization problems involving complex variables when solved, are typically transformed into real variables, often at the expense of convergence rate and interpretability. This paper introduces a novel formalism for a prominent problem in stochastic optimization involving complex random variables, termed the Complex Chance-Constrained Problem (CCCP). The study specifically examines the linear CCCP under complex normal distributions for two scenarios: one with individual probabilistic constraints and the other with joint probabilistic constraints. For the individual case, the core methodology reformulates the CCCP into a deterministic Second-Order Cone Programming (SOCP) problem, ensuring equivalence to the original CCCP. For the joint case, an approximation is achieved by deriving suitable upper and lower bounds, which also leads to a SOCP formulation. Finally, numerical experiments on a signal processing application—specifically, the Minimum Variance Beamforming problem with mismatch using MVDR—demonstrate that the proposed formalism outperforms existing approaches in the literature. A comparative analysis between the joint and individual CCCP cases is also included.

\keywords{Stochastic Optimization \and Chance-Constrained Programming \and Joint Constraint Problem \and Second-Order Cone Programming \and Complex Normal Distribution \and Adaptive Beamforming.}
\end{abstract}
\section{Introduction}
Mathematical optimization is the systematic process of maximizing or minimizing an objective while adhering to defined constraints. However, many optimization problems are complicated by uncertainties in data or models. This leads to a critical subfield known as Optimization Under Uncertainty, or Stochastic Optimization, it was first introduced by George Dantzig in the 1950s \cite{dantzig1954solution}. This field is widely studied because real-world problems often involve unpredictable factors that cannot be fully anticipated. Addressing these uncertainties is essential to develop solutions that are both reliable and effective.

One prominent framework for handling uncertainty in optimization is chance-constrained programming (CCP), introduced by Abraham Charnes and William W. Cooper in 1959 \cite{charnes1959chance}. CCP incorporates uncertainty directly into the constraints by representing them as probabilistic conditions. In this approach, constraints must be satisfied with a predefined confidence level, ensuring a balance between feasibility and risk. The importance of addressing CCP problems, particularly when the decision variable is complex, is increasingly evident due to their applications in various real-world domains, such as signal processing \cite{ma2012chance} (refer to Section \ref{sec: Section 8}). Notably, tackling these problems requires alternative approaches that leverage the complex nature of the variables, the distribution of stochastic variables, and the relation matrix, which depends on both \( z \) and its conjugate \( \Bar{z} \). These approaches differ significantly from methods that convert complex variables into real ones using $\mathbb{CR}$-calculus (Wirtinger Calculus \cite{wirtinger1927formalen}\cite{remmert1991theory}). The latter approach loses key exploitable properties \cite{goodman1963statistical}\cite{picinbono1996second}.

Another critical reason for directly handling complex variables lies in the computational implications of conversion. Transforming complex variables into real ones doubles the problem's dimensionality, potentially leading to increased computational time as in table \ref{tab:table1}, slower convergence rates, issues related to the curse of dimensionality \cite{bellman1966dynamic}, and reduced algorithmic accuracy, as seen in \cite{gidi2023stochastic}, the authors proved that solving optimization problems in the complex domain yields better accuracy compared to converting the problem to the real domain particularly when applied to quantum computing problems. 
\begin{table}[ht]
    \centering
    \caption{Average objective values and computation times for minimizing $\|\sqrt{A}\; z\|$ using Gradient Descent (GD) and SPSA \cite{gidi2023stochastic} over 50,000 iterations in 10D and 20D complex and real spaces. Complex algorithms ($GD_C$, $SPSA_C$) achieve comparable accuracy to real-space conversions ($GD_R$, $SPSA_R$) while significantly reducing computation time.}
    \begin{tabular}{|c|cccc|cccc|}
    \midrule
       size  & $GD_C$ & $GD_R$ & $GD_C$ & $GD_R$ & $SPSA_C$ & $SPSA_R$ & $GD_C$ & $GD_R$  \\
       & obj & obj & time & time & obj & obj & time & time\\
       \midrule
       10 & 0.0 & 0.1 & 20.3 & 40.1 & 0.7 & 0.5 & 33.7 & 63.1\\
       20 & 0.1 & 0.1 & 49.3 & 180.6 & 1.6 & 1.4 & 76.8 & 275.9\\
       \midrule
    \end{tabular}
    \label{tab:table1}
\end{table}
The motivation of our work stems from the need to address challenges in quantum optimization,
where hybrid classical-quantum optimization techniques have emerged as a powerful approach for solving problems on today's noisy intermediate-scale quantum (NISQ) computers. These methods combine the strengths of classical and quantum computing by running optimization algorithms on a classical computer, guided by objective function values derived from a quantum processor. For instance, in the Quantum Approximate Optimization Algorithm (QAOA) \cite{farhi2014quantum}, the expected cost Hamiltonian matrix is optimized to determine the variational parameters. Converting this problem into its real counterpart can lead to the loss of its physical meaning, the degradation of key characteristics of the unitary operator, and an unnecessary increase in the problem's dimensionality. Uncertainties in quantum systems, such as time-varying noises \cite{knill2005quantum}, inhomogeneous quantum ensembles and uncertainties in the Hamiltonian \cite{young2013adiabatic}\cite{fei2024binary}, necessitate optimization frameworks that can handle unpredictable dynamics while maintaining the quality of control policies. CCP with complex decision variables offers a robust approach to model and manage these uncertainties effectively, ensuring high-quality and reliable solutions in quantum optimization. 

Reformulation techniques are essential for managing chance constraints in optimization by transforming them into more tractable forms for analysis and solution. For individual linear chance constraints, an equivalent reformulation as second-order cone programming (SOCP) is possible under certain conditions \cite{prekopa2013stochastic}\cite{henrion2008convexity}. For linear joint chance constraints with continuous random variables, in \cite{cheng2015chance}, a full proof is provided using copula theory to convert the problem into a deterministic one. Then, various convex approximations have been proposed, such as \cite{cheng2012second}, Cheng and Lisser derived lower and upper bounds using Taylor series and piecewise methods. 

This paper introduces a novel approach for addressing optimization problems involving complex variables under uncertainty, referred to as Complex-Chance-Constraint Programming (CCCP). To the best of our knowledge, the general form of CCP with complex random variables has not been previously formulated or tackled. Our work focuses on the linear case, where both the objective and constraints are linear, and the random variables follow complex normal distributions. We address two key problem classes: The first class is the individual CCCP, where we transform stochastic problems into equivalent SOCPs by exploiting properties of the mean and variance. The second class is the joint CCCP with dependent matrix rows. For this, we convert the joint CCCP into a deterministic problem by employing copulas to handle dependencies among random vectors. As the resulting problem is biconvex, we approximate the solution by deriving lower bounds using a Taylor series approximation and upper bounds using the piecewise approach. The results have since been extended to encompass nonlinear scenarios with respect to decision variables while remaining linear in relation to random variables. To demonstrate their practical utility, we apply these findings to a robust beamforming problem, specifically addressing the MDVR (Minimum variance distortionless response) case with a mismatch.

The paper is organized as follows: Section \ref{sec: Section 2} presents the preliminaries and problem formulation. Section \ref{sec: Section 3} introduces the CCCP problem. Section \ref{sec: Section 4} discusses the normal distribution of complex affine functions. Section \ref{sec: Section 5} presents the Individual-CCCP. Section \ref{sec: Section 6} delves into the Joint-CCCP, detailing the problem and providing upper and lower approximations. Section \ref{sec: Section 8} presents applications from signal processing (MVDR with mismatch), and simulation experiments comparing our approach with approaches from the literature and also comparisons between Individual and Joint CCCP. Finally, Section \ref{sec: Section 9} concludes it all.
\section{Preliminaries}\label{sec: Section 2}
The set of complex vectors is denoted by $\mathbb{C}^n$. The complex number can be written as $z=(x,y)=x+iy$, where $x=\Re(z)$ is the real part of $z$ and $y=\Im(z)$ is the image part of $z$ \cite{ahlfors1979complex}. The complex conjugate of $z$ is defined as $\bar{z}=(x,-y)=x-iy$. The transpose and the complex conjugate transpose of $z$ are denoted by $z^T$ and $z^H$ respectively. The Euclidean norm of $z$ is equal to $\|z\|=\sqrt{zz^H}=\sqrt{x^2+y^2}$. For a complex random vector $z\in\mathbb{C}^n$ \cite{wooding1956multivariate}, let $\mu_z, \Gamma_z$ and $C_z$ be the mean, covariance matrix, and relation matrix, respectively, then the complex normal distribution is given by $z\sim\mathcal{N}_c(\mu_z, \Gamma_z, C_z)$.
\begin{definition}\label{def 1}
     The mean, covariance, and relation matrix  of the random vector $z$ are given by:
\begin{align}
    &\mu_z = \mathbb{E}[z] = \mathbb{E}[x]+ i\mathbb{E}[y]\\
    &Cov(z,z)=\Gamma_z = \Gamma_x + \Gamma_y +i(\Gamma_{yx}-\Gamma_{xy})\\
    &Cov(z,\bar{z})=C_z = \Gamma_x - \Gamma_y + i(\Gamma_{yx} + \Gamma_{xy}) 
\end{align}
\end{definition}
\textbf{Properties:}
\begin{enumerate}
    \item The covariance matrix is Hermitian, and the relation matrix is symmetric. \label{prop: 1}
    \item $Cov(z,w)=\overline{Cov(w,z)}$. \label{prop: 2}
    \item $ \forall\alpha\in\mathbb{C}, \; Cov(\alpha z,w)=\alpha Cov(z,w)$, and $Cov(z,\alpha w) = \bar{\alpha}Cov(z,w)$. \label{prop: 3}
    \item $Cov(\sum_i z_i,\sum_j w_j) =\sum_{i,j} Cov(z_i,w_j)$. \label{prop: 4}
    \item The covariance matrix $\Gamma_z$ is positive semidefinite: $a^H\Gamma_za\geq0, \forall a\in\mathbb{C}^n.$\label{prop: 5}
    \item if $z\sim\mathcal{N}_c(\mu_z, \Gamma_z,C_z)$ is random variable, and if $A\in\mathbb{C}^{m\times n}$ and $b\in\mathbb{C}^n$, then $Az+b\sim \mathcal{N}_c(A\mu_z+b, A\Gamma_zA^H, AC_zA^T).$\label{prop: 6}
\end{enumerate}


\section{Complex Chance Constraint Problem (CCCP)}\label{sec: Section 4}\label{sec: Section 3}
The complex chance-constraint problem is given as follows:
\begin{align*}
       \text{(CCCP)  ~} &\min f(z,\xi)\\
        &s.t.~ g(z,\xi) = 0\\
        &\mathbb{P}[h(z,\xi)\leq 0]\geq p,
\end{align*}\label{CCCP}
where $f:\mathbb{C}^n\mapsto\mathbb{R}$, $g:\mathbb{C}^n\mapsto\mathbb{C}^k$, and $h:\mathbb{C}^n\mapsto\mathbb{R}^m$. The deterministic decision variables are given by the complex vector $z\in\mathbb{C}^n$, while $\xi$ is the complex random vector containing all uncertainties in the form of complex random variables. We have two classes, first one is the individual CCCP which requires that each constraint be satisfied independently with a certain probability, and then $p=[p_1,\cdots,p_m], p_i\in[0,1];\forall i$ is the probability vector. The other class is the joint CCCP which requires that all inequality constraints in the set be satisfied simultaneously with a certain joint probability, and then $p\in[0,1]$. In this paper, we assume that the feasible set is not empty and $p\geq0.5$.

\section{Normal Distribution of Complex Affine Functions}\label{sec: Section 4}
In this section, we introduce two lemmas that play a crucial role in this paper. We discuss the distribution of the real part of affine functions with complex random variables, which are given by $\Re(c^Hz)$ and $\Re(Az-b)$.
\begin{lemma}\label{lemma 1}
    If $c_j;j=1,\cdots,n$ are a random variables with $c_j\sim\mathcal{N}_c(\mu_{c_j}, \Gamma_{c_j}, C_{c_j})$, if the real and image parts of $c_j$ are independent, then $\Re(c^Hz)$ is real normally distributed with 
\begin{align}
    \Re(c^Hz)\sim\mathcal{N} \left(\Re(\mu_c^Hz),\frac{1}{2}(\Re(z^H\Gamma_cz+z^HC_c\bar{z})) \right)
\end{align}
Furthermore, the variance is a quadratic function.
\end{lemma}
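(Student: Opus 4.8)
The plan is to reduce the statement to a single complex scalar $w=c^{H}z$ and to analyze its real part. Writing $c_j=x_j+iy_j$ and $z_j=a_j+ib_j$, a direct expansion gives $\Re(c^{H}z)=\sum_j\Re(\bar c_j z_j)=\sum_j(a_jx_j+b_jy_j)$, a \emph{real}-linear combination of the real and imaginary parts of the $c_j$. Since each $c_j$ is complex normal, each pair $(x_j,y_j)$ is jointly (real) Gaussian, and the independence hypothesis on the real and imaginary parts is exactly what lets me conclude that this linear combination is again a univariate normal variable. This settles the distributional claim and reduces the lemma to computing two scalar moments.

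For the mean, linearity of expectation gives $\mathbb{E}[\Re(c^{H}z)]=\Re(\mathbb{E}[c^{H}z])=\Re(\mu_c^{H}z)$, matching the asserted mean. The heart of the argument is the variance. I would first establish the unconditional scalar identity
\begin{equation}
\operatorname{Var}\!\big(\Re(w)\big)=\tfrac12\,\Re\!\big(\Gamma_w+C_w\big),
\end{equation}
by setting $w=u+iv$ with $u=\Re(w)$, $v=\Im(w)$, and expanding the scalar covariance $\Gamma_w=\mathbb{E}|w-\mu_w|^2=\operatorname{Var}(u)+\operatorname{Var}(v)$ and the relation $C_w=\mathbb{E}[(w-\mu_w)^2]=\operatorname{Var}(u)-\operatorname{Var}(v)+2i\operatorname{Cov}(u,v)$; adding and taking real parts cancels $\operatorname{Var}(v)$ and the cross term, leaving $2\operatorname{Var}(u)$.

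It then remains to express $\Gamma_w$ and $C_w$ in terms of $\Gamma_c$ and $C_c$. Using bilinearity of the covariance (Properties \ref{prop: 3}--\ref{prop: 4}) together with the conjugation rules $\operatorname{Cov}(\bar c_j,\bar c_k)=\overline{\operatorname{Cov}(c_j,c_k)}$ and $\operatorname{Cov}(\bar c_j,c_k)=\overline{\operatorname{Cov}(c_j,\bar c_k)}$ (Property \ref{prop: 2}), I would compute $\Gamma_w=\sum_{j,k}z_j\bar z_k\,\overline{(\Gamma_c)_{jk}}$ and $C_w=\sum_{j,k}z_jz_k\,\overline{(C_c)_{jk}}$. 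Taking conjugates and relabelling indices, the Hermitian symmetry of $\Gamma_c$ and the (complex) symmetry of $C_c$ (Property \ref{prop: 1}) collapse these to $\Gamma_w=z^{H}\Gamma_c z$ and $\Re(C_w)=\Re(z^{H}C_c\bar z)$. Substituting into the identity yields the claimed variance $\tfrac12\Re(z^{H}\Gamma_c z+z^{H}C_c\bar z)$, which is manifestly a quadratic form in $z$.

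The step I expect to be most delicate is the bookkeeping of conjugates when applying the covariance algebra: one must keep straight which argument of $\operatorname{Cov}$ contributes a conjugated scalar factor (Property \ref{prop: 3}) and then correctly invoke the Hermitian/symmetric structure of $\Gamma_c$ and $C_c$ to reach the clean matrix forms. A secondary subtlety is pinning down the exact role of the independence assumption: it is not needed for the variance identity, which holds unconditionally, but rather to guarantee that the real-linear combination $\sum_j(a_jx_j+b_jy_j)$ is genuinely Gaussian rather than merely carrying the stated first two moments.
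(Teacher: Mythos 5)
Your proposal is correct and follows essentially the same route as the paper: the mean by linearity, and the variance by bilinearity of the complex covariance and relation operators, differing only in that you package the key step as the scalar identity $\operatorname{Var}(\Re(w))=\tfrac12\Re(\Gamma_w+C_w)$ for $w=c^Hz$ rather than expanding $\Re(w)=\tfrac12(w+\bar w)$ into four variance/covariance terms as the paper does. Your bookkeeping in fact lands exactly on the stated form $\tfrac12\Re(z^H\Gamma_cz+z^HC_c\bar z)$ (the paper's proof writes $z^TC_cz$ at the analogous point), and your closing observation --- that independence of the real and imaginary parts is not needed for the variance identity itself but only for the distributional/consistency claim --- is a fair and slightly sharper reading than the paper's remark that $\Gamma_{c^rc^i}$ must vanish.
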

\begin{proof}
Using Definition (\ref{def 1}) to find the mean and the covariance matrix, the mean of $F$ is given by:
\begin{align}
&\mathbb{E}[\Re(c^Hz)]=\Re(\mu_c^Hz)
\end{align}
Using property (\ref{prop: 4}) the variance is given by:
\begin{align}
Var&\left(\frac{1}{2} \sum_{j=1}^n \bar{c}_jz_j+\bar{z}_jc_j\right)=\frac{1}{4}(Var(\sum_{j=1}^n \bar{c}_jz_j)+Var(\sum_{j=1}^n \bar{z}_jc_j)\\
&+Cov(\sum_{j=1}^n\bar{c}_jz_j,\sum_{j=1}^n\bar{z}_jc_j)+Cov(\sum_{j=1}^n\bar{z}_jc_j,\sum_{j=1}^n \bar{c}_jz_j))
\end{align}
Decomposing the term by applying the properties of variance, and
using Definition (\ref{def 1}) the variance will be equal to: 
\begin{equation} 
\begin{aligned}
        \frac{1}{2}\Re\left(z^H\Gamma_cz + z^TC_cz\right)=x^T\Gamma_{c^r}x + x^T(\Gamma_{c^rc^i}+\Gamma_{c^ic^r})y + y^T\Gamma_{c^i}y \label{12}
\end{aligned}
\end{equation}
To guarantee the positive semidefinitness of the variance, $\Gamma_{c^rc^i}$ must be zero.
\end{proof}
\begin{lemma}\label{lemma 2}
    Let $a_{ij}\sim \mathcal{N}_c(\mu_{a_{ij}},\Gamma_{a_{ij}}, C_{a_{ij}})\quad j=1,\cdots, n,$ $ i=1,\cdots,m$, and $b_i\sim \mathcal{N}_c(\mu_{b_i},\Gamma_{b_i},C_{b_i}),$ $ i=1,\cdots, m$, if the real and imaginary parts of $A_i=(a_{i1},\cdots a_{in})$ are independent, then $\Re(A_iz-b_i)$ are real normally distribution, with 
\begin{align}
    \Re(A_iz-b_i)\sim\mathcal{N}\left(\Re(\mu_{A_i}z)-\mu_{b_i}, \frac{1}{2}\Re\left(z^H\Gamma_{A_i}z+z^TC_{A_i}z\right)+\sigma_{b_i}\right),~i=1,\cdots, m
\end{align}
Furthermore, the covariance matrix of $\Re(A_iz-b_i)$ is a quadratic function.
\end{lemma}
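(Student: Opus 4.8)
The plan is to treat Lemma \ref{lemma 2} as the affine, vector-valued extension of Lemma \ref{lemma 1} and to reuse that lemma rather than repeat the covariance algebra. The first step is a distributional observation: for each fixed $i$, the quantity $A_iz-b_i=\sum_{j=1}^n a_{ij}z_j-b_i$ is a complex affine combination, with deterministic coefficients $z_j$, of the jointly complex-normal variables $a_{ij}$ and $b_i$. By Property \ref{prop: 6} (and closure of the complex normal family under addition of independent terms), such a map sends a complex normal vector to a complex normal variable, so $A_iz-b_i$ is complex normal and hence its real part $\Re(A_iz-b_i)$ is real normally distributed. This settles the qualitative claim and reduces the work to identifying the two real parameters.

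For the mean I would use linearity of expectation with Definition \ref{def 1}: $\mathbb{E}[\Re(A_iz-b_i)]=\Re(\mu_{A_i}z)-\mu_{b_i}$. The variance is the substantive part. Writing $\Re(A_iz-b_i)=\Re(A_iz)-\Re(b_i)$ and using the (implicit) independence of $A_i$ and $b_i$, the two variances add, so $\mathrm{Var}(\Re(A_iz-b_i))=\mathrm{Var}(\Re(A_iz))+\sigma_{b_i}$. To evaluate the first term I would set $c:=A_i^{H}$, so that $c^Hz=A_iz$ and $\Re(A_iz)=\Re(c^Hz)$ falls exactly under Lemma \ref{lemma 1}; its variance is $\tfrac12\Re(z^H\Gamma_cz+z^TC_cz)$, which after re-expressing $\Gamma_c,C_c$ through $\Gamma_{A_i},C_{A_i}$ should reproduce the claimed $\tfrac12\Re(z^H\Gamma_{A_i}z+z^TC_{A_i}z)$. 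Adding $\sigma_{b_i}$ then gives the stated variance. Equivalently, one can obtain the same expression directly by expanding $\Re(A_iz)=\tfrac12(A_iz+\overline{A_iz})$ and applying Properties \ref{prop: 3}--\ref{prop: 4} to the four resulting covariance and relation terms, exactly as in the proof of Lemma \ref{lemma 1}.

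The final ``quadratic covariance'' assertion is then immediate: $z^H\Gamma_{A_i}z$ and $z^TC_{A_i}z$ are quadratic forms in $(\Re z,\Im z)$, so the variance is quadratic while the mean is affine in those variables.

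I anticipate the main obstacle to be the bookkeeping of the covariance and relation matrices under conjugation. Since $A_iz$ carries no conjugate on the random coefficients (unlike $c^Hz$ in Lemma \ref{lemma 1}), the substitution $c=A_i^{H}$ introduces conjugated matrices $\Gamma_c=\overline{\Gamma_{A_i}}$ and $C_c=\overline{C_{A_i}}$, and one must exploit the Hermitian symmetry of $\Gamma_{A_i}$ and the symmetry of $C_{A_i}$ (Property \ref{prop: 1}) to match the displayed form $z^H\Gamma_{A_i}z+z^TC_{A_i}z$; reconciling the conjugation conventions so that precisely this expression is recovered is the only genuinely nontrivial algebra. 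The second delicate point, inherited from Lemma \ref{lemma 1}, is ensuring the quadratic form is a legitimate nonnegative variance: as there, the hypothesis that the real and imaginary parts of $A_i$ are independent is exactly what forces the cross-covariance block to vanish, guaranteeing positive semidefiniteness of the resulting form in $x=\Re z$ and $y=\Im z$.
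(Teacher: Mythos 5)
Your proposal is correct and follows essentially the same route as the paper, whose proof of Lemma~\ref{lemma 2} is simply a deferral to the computation in Lemma~\ref{lemma 1}: you reduce $\Re(A_iz)$ to the Lemma~\ref{lemma 1} setting, add the $\sigma_{b_i}$ term via independence of $A_i$ and $b_i$, and note the same independence condition on real and imaginary parts needed for a valid nonnegative variance. Your explicit handling of the conjugation bookkeeping and the invocation of Property~\ref{prop: 6} for normality are welcome details the paper leaves implicit, but they do not constitute a different argument.
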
 
\begin{proof}
Similar to Lemma (\ref{lemma 1}) proof.
\end{proof}
From this point forward, we assume that the real and image parts of $c$ are independent, the real and image parts of $A_i,\quad i=1,\cdots,m$ are independent, and $b_i, \; A_i \; \forall i$ are independent, and we consider Linear CCCP.

\section{Individual-CCCP}\label{sec: Section 5}
Let $c_j, a_{ij}$ and $b_i$ be complex random variables with known probability distributions, then:
\begin{align*}
        (P1)~~\min &\sum_{j=1}^n \Re(\bar{c}_jz_j)\\
         \text{s.t. } &\mathbb{P}\left[\sum_{j=1}^n \Re(a_{ij}z_j-b_i)\leq 0\right]\geq p_i, \quad \forall i\\
        &\Re(z)\geq 0,\quad \Im(z)\geq 0\label{P1}
    \end{align*}
\subsection{Individual CCCP as SOCP}\label{sec: 4.2}
This section aims to convert the stochastic problem (P1) to a deterministic problem, which is a complex second-order problem. Using Lemma (\ref{lemma 1}), the new deterministic objective function minimization can be formulated as follows:
\begin{align}
q_1\Re(\mu_c^Hz) + q_2\sqrt{\frac{1}{2}(z^H\Gamma_cz+\Re(z^HC_c\bar{z}))}
\end{align}
Where $q_1$ and $q_2$ are nonnegative weights for the mean and variance. Let
\begin{align}
    h_i = \sum_{j=1}^n \Re(\bar{a}_{ij}z_j)-b_i=\sum_{j=1}^n\frac{1}{2}(a_{ij}z_j+\overline{a_{ij}z_j})-b_i,~\forall i
\end{align}
Thus the constraint of the problem is:
\begin{align}
    \mathbb{P}\left[(h_i-\mu_{h_i})Cov(h_i)^{-\frac{1}{2}}\leq -\mu_{h_i}Cov(h_i)^{-\frac{1}{2}}\right]\geq p_i,\quad \forall i
\end{align}
Let $\Phi(x)$ represent the CDF of the standard normal distribution evaluated at $x$. The constraint is stated as:
\begin{align}
    &\Phi\left(-\mu_{h_i}Cov(h_i)^{-\frac{1}{2}}\right)\geq p_i\iff \mu_{h_i}+\Phi^{-1}(p_i)\sqrt{Cov(h_i)}\leq 0
\end{align}
Then, the deterministic equivalent of problem \ref{P1} can be written as: 
\begin{align*}
   (P2)~~     \min & ~~q_1\Re(\mu_c^Hz)+q_2\sqrt{\frac{1}{2}\Re(z^H\Gamma_cz+z^HC_c\bar{z})}\\
        \text{s.t. }& \Re(\mu_{A_i}z)-\mu_{b_i}+ \Phi^{-1}(p_i)\sqrt{\frac{1}{2}\Re(z^H\Gamma_{A_i}z+z^TC_{A_i}z)+\sigma_{b_i}}\leq0,\quad \forall i\\
        &\Re(z)\geq0,\quad \Im(z)\geq 0
\end{align*}
\begin{theorem} Problem (P2) is a convex problem, furthermore, it's a second-order cone problem.
\end{theorem}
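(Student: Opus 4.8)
The plan is to eliminate the complex structure by splitting the decision variable into $x=\Re(z)$ and $y=\Im(z)$, stacking them into $u=(x^{T},y^{T})^{T}\in\mathbb{R}^{2n}$, and then showing that in the variable $u$ both the objective and every constraint have the affine-plus-Euclidean-norm shape that characterizes a second-order cone program. The starting observation, inherited from Lemma~\ref{lemma 1} and Lemma~\ref{lemma 2}, is that each quantity sitting under a square root is the \emph{variance} of a real random variable (namely $\Re(c^{H}z)$ or $\Re(A_iz-b_i)$), hence nonnegative for every $z$. Consequently the quadratic forms $\tfrac12\Re(z^{H}\Gamma_{c}z+z^{H}C_{c}\bar z)$ and $\tfrac12\Re(z^{H}\Gamma_{A_i}z+z^{T}C_{A_i}z)$ are positive semidefinite in $u$. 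Writing such a form as $u^{T}Qu$ with $Q\succeq0$ and factoring $Q=M^{T}M$, I get $\sqrt{u^{T}Qu}=\|Mu\|$, a genuine Euclidean norm of a linear image of $u$, and therefore convex.

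First I would treat the objective. After this factorization its variance term becomes $q_2\|M_cu\|$, while $q_1\Re(\mu_c^{H}z)$ is real-linear in $u$; since $q_1,q_2\ge0$, the objective is a nonnegative combination of a linear function and a norm, hence convex. To expose the cone form explicitly I introduce an epigraph variable $t$ and replace the objective by $q_1\Re(\mu_c^{H}z)+q_2 t$ together with the second-order cone constraint $\|M_cu\|\le t$.

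Next I would handle each probabilistic constraint. The expression $\tfrac12\Re(z^{H}\Gamma_{A_i}z+z^{T}C_{A_i}z)+\sigma_{b_i}$ is a PSD quadratic form plus the nonnegative scalar $\sigma_{b_i}$, so it equals $\|(M_{A_i}u\,;\,\sqrt{\sigma_{b_i}})\|^{2}$ and its square root is again a norm of an affine function of $u$. The $i$-th constraint then reads $\bigl(\text{affine in }u\bigr)+\Phi^{-1}(p_i)\,\|(M_{A_i}u\,;\,\sqrt{\sigma_{b_i}})\|\le0$. The decisive point is the standing assumption $p_i\ge0.5$, which gives $\Phi^{-1}(p_i)\ge0$; rearranging then yields exactly a standard cone constraint $\|\Phi^{-1}(p_i)(M_{A_i}u\,;\,\sqrt{\sigma_{b_i}})\|\le \mu_{b_i}-\Re(\mu_{A_i}z)$, whose right-hand side is affine in $u$. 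Finally, $\Re(z)\ge0$ and $\Im(z)\ge0$ are linear (degenerate cone) constraints. Collecting the epigraph reformulation of the objective with these cone constraints presents (P2) as an SOCP; since each constraint set is convex and the objective convex, (P2) is convex.

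I expect the main obstacle to be the rigorous justification that the matrices defining the quadratic forms are positive semidefinite, i.e.\ that $\sqrt{\,\cdot\,}$ really is a norm and not the square root of an indefinite form. The clean route is the variance argument above (a variance is nonnegative by definition), which additionally reconciles the two slightly different $C$-terms appearing in the objective versus the constraints with the real PSD form derived in the proof of Lemma~\ref{lemma 1}. The secondary point requiring care is that convexity hinges entirely on $\Phi^{-1}(p_i)\ge0$, so the hypothesis $p\ge0.5$ is essential and cannot be dropped without turning the constraints into reverse (nonconvex) cone conditions.
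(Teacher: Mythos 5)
Your proposal is correct and follows essentially the same route as the paper: both decompose $z$ into real and imaginary parts, recognize each square-rooted quadratic form as a PSD form that factors into a Euclidean norm of an affine map (the paper's $\|\sqrt{K_1}^T s\|$, $\|\sqrt{K_{2_i}}^T s\|$ with the augmented vector $s=[\Re(z),\Im(z),1]^T$ playing the role of your $u$ with the appended $\sqrt{\sigma_{b_i}}$), and rearrange each chance constraint into a standard cone constraint. Your added justifications --- the variance argument for positive semidefiniteness and the explicit remark that $\Phi^{-1}(p_i)\ge 0$ because $p_i\ge 0.5$ --- are points the paper asserts without proof, so they strengthen rather than alter the argument.
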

\begin{proof}
Let $s =
\begin{bmatrix}
    \Re(z) & \Im(z) & 1
\end{bmatrix}^T$,
$K_0 =
\begin{bmatrix}
    \Re(\mu_c) & \Im(\mu_c) & 0
\end{bmatrix}^T,$ $ K_{1}=\begin{bmatrix}
    \Gamma_{\Re(c)} & O^{n\times n} & O^{n\times 1}\\
     O^{n\times n} & \Gamma_{\Im(c)} & O^{n\times 1} \\
     O^{1\times n} & O^{1\times n} & 0
\end{bmatrix}$ $ K_{2_i}=\begin{bmatrix}
    \Gamma_{\Re(A_i)} & O^{n\times n} & O^{n\times 1}\\
    O^{n\times n} & \Gamma_{\Im(A_i)} & O^{n\times 1}\\
    O^{1\times n} & O^{1\times n} & \sigma_{b_i}
\end{bmatrix}$, $ K_{3_i} = \begin{bmatrix}
    -\Re(\mu_{A_i}) & - \Im(\mu_{A_i}) & \mu_{b_i}
\end{bmatrix}^T$.\\
Using Lemma (\ref{lemma 2}), the stochastic linear programming problem can be stated as an equivalent deterministic nonlinear programming problem:
    \begin{align*}
       (\text{P3})~~ &\min q_1 s^TK_0 + q_2\|\sqrt{K_{1}}^{T}s\|\\
        &\text{s.t.}~~\Phi^{-1}(p_i)\|\sqrt{K_{2_i}}^{T}s\|\leq s^TK_{3_i}\quad,\forall i\\
        &\quad~~~s\geq 0
    \end{align*}
$K_1$ and $K_{2_i}$ are positive semidefinite matrices, then (P2) gives the SOCP.
\end{proof}

\section{Joint-CCCP}\label{sec: Section 6}
Linear CCCP uses the theory of copulas to represent row dependence. 
    \begin{align*}
       \text{(P4)}~~ \min &\sum_{j=1}^n \Re(\bar{c}_jz_j)\\
         \text{s.t. } &\mathbb{P}\left[\Re(Az-b)\leq 0\right]\geq p, \\
        &\Re(z)\geq 0,\quad \Im(z)\geq 0
    \end{align*}
where $c, A,$ and $b$ are complex normal distributed.
\begin{theorem}
    Problem (P4) is equivalent to the following \emph{bi-convex} problem:
    \begin{align*}
        \text{(P5)}~~\min & ~~q_1\Re(\mu_c^Hz)+q_2\sqrt{\frac{1}{2}\Re(z^H\Gamma_cz+z^HC_c\bar{z})}\\
        \text{s.t. }& \Re(\mu_{A_i}z)-\mu_{b_i} + \Phi^{-1}(p^{y_i^{\frac{1}{\theta}}})\sqrt{\frac{1}{2}\Re(z^H\Gamma_{A_i}z+z^TC_{A_i}z)+\sigma_{b_i}}\leq0,\quad \forall i\\
        & \sum_{i=1}^m y_i=1,\quad\Re(z)\geq0,\quad \Im(z)\geq 0,\quad y\geq 0
    \end{align*}
\end{theorem}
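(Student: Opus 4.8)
The plan is to split the argument into a copula-based reformulation of the joint chance constraint that produces the auxiliary variables $y_i$, followed by a block-convexity check that certifies the bi-convex structure. First I would record the marginals: by Lemma \ref{lemma 2}, for fixed $z$ each $h_i(z):=\Re(A_iz-b_i)$ is real Gaussian with mean $m_i(z)=\Re(\mu_{A_i}z)-\mu_{b_i}$ and variance $\sigma_i^2(z)=\tfrac12\Re(z^H\Gamma_{A_i}z+z^TC_{A_i}z)+\sigma_{b_i}$, so its CDF at the origin is $F_i(z)=\Phi(-m_i(z)/\sigma_i(z))$. Invoking Sklar's theorem and the assumed Gumbel--Hougaard (Archimedean) dependence of the rows, with generator $\psi_\theta(t)=e^{-t^{1/\theta}}$, the joint probability factors as
\begin{equation*}
\mathbb{P}[\Re(Az-b)\le 0]=C_\theta\bigl(F_1(z),\dots,F_m(z)\bigr),\qquad C_\theta(u)=\exp\Bigl(-\bigl(\textstyle\sum_i(-\ln u_i)^\theta\bigr)^{1/\theta}\Bigr).
\end{equation*}

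The core step is the copula split: I would prove that $C_\theta(u)\ge p$ holds if and only if there exist $y_i\ge 0$ with $\sum_i y_i=1$ such that $u_i\ge p^{\,y_i^{1/\theta}}$ for every $i$. Sufficiency follows from monotonicity of $C_\theta$ and the generator identity $\psi_\theta\!\bigl(y_i(-\ln p)^\theta\bigr)=p^{\,y_i^{1/\theta}}$, which makes $C_\theta$ equal exactly $p$ at the boundary point $u_i=p^{y_i^{1/\theta}}$ when $\sum_i y_i=1$. For the converse I would set $\tilde y_i=(-\ln u_i)^\theta/(-\ln p)^\theta\ge 0$; the hypothesis $C_\theta(u)\ge p$ is precisely $\sum_i\tilde y_i\le 1$, and I can distribute the slack $1-\sum_i\tilde y_i$ to inflate the $\tilde y_i$ to a simplex point, which preserves every inequality since enlarging $y_i$ only lowers the threshold $p^{y_i^{1/\theta}}$. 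Substituting $u_i=F_i(z)=\Phi(-m_i/\sigma_i)$ and applying the monotone $\Phi^{-1}$ converts each split inequality into $m_i(z)+\Phi^{-1}\!\bigl(p^{y_i^{1/\theta}}\bigr)\sigma_i(z)\le 0$, which is exactly the $i$-th constraint of (P5). As the objective is unchanged and inherited from the individual case, $z$ is feasible for (P4) iff some pair $(z,y)$ is feasible for (P5), which yields equivalence of the two programs.

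Finally I would verify bi-convexity. With $y$ fixed, the standing bounds $p\ge 0.5$, $\theta\ge 1$ and $y_i\in[0,1]$ force $y_i^{1/\theta}\le 1$, hence $p^{y_i^{1/\theta}}\ge p\ge 0.5$ and $\Phi^{-1}(p^{y_i^{1/\theta}})\ge 0$, so each constraint is a bona fide second-order cone constraint in $z$ and is convex exactly as in the individual SOCP theorem proved above. With $z$ fixed, the constraints separate across the $y_i$, and convexity reduces to that of $y\mapsto\Phi^{-1}(p^{y^{1/\theta}})$ on $[0,1]$: since $y^{1/\theta}$ is concave and $t\mapsto p^t$ is convex and decreasing, the inner map $y\mapsto p^{y^{1/\theta}}$ is convex with range in $[p,1)\subset[0.5,1)$, and composing with the increasing convex branch of $\Phi^{-1}$ there preserves convexity; together with the linear simplex constraint this establishes convexity of the $y$-block.

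I expect the main obstacle to be the converse direction of the copula split---recovering admissible splitting variables from an arbitrary feasible $z$ of (P4)---because that is precisely where the exactness of the reformulation (rather than a one-sided bound) is decided; the supporting convexity of $y\mapsto\Phi^{-1}(p^{y^{1/\theta}})$ is a delicate composition argument that relies essentially on the standing assumptions $p\ge 0.5$ and $\theta\ge 1$.
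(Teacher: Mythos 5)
Your proposal is correct, and it is essentially the argument the paper relies on: the paper's own ``proof'' is only a citation to \cite{cheng2015chance}, and your copula-split (the two-sided equivalence between $C_\theta(u)\ge p$ and the existence of simplex weights $y$ with $u_i\ge p^{y_i^{1/\theta}}$, via $\tilde y_i=(-\ln u_i)^\theta/(-\ln p)^\theta$) together with the block-convexity check is a faithful, complete reconstruction of that argument. The only points worth flagging are that the step invoking Sklar's theorem rests on the standing modelling assumption that the dependence among the rows $\Re(A_iz-b_i)$ is Gumbel--Hougaard with a copula that does not vary with $z$ (the paper states this only informally as ``uses the theory of copulas to represent row dependence''), and that replacing the stochastic objective by the mean--variance form $q_1\Re(\mu_c^Hz)+q_2\sqrt{\cdot}$ is a modelling convention carried over from the individual case rather than part of the feasibility equivalence you prove.
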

\begin{proof}
    The proof has been done in \cite{cheng2015chance}.
\end{proof}
\subsection{Lower and Upper Approximations of Problem (P5)}\label{sec: 5.2}
To find a SOCP approximation of (P5), we approximate the quantile function by deriving both an upper and a feasible lower bound. 
\subsubsection{Lower Bound Approximation of (P5)}
To formulate the deterministic problem with a lower bound solution to (P5), we find an approximation of quantile function $\Phi_l = \Phi^{-1}(p^{y_i^{1/\theta}})$, using Taylor series expansion around $N$ tangent points $r_l, \; l=1,\cdots,N$,  $r_l \in (0,1]$ with $r_1<r_2<\cdots<r_N$. The approximation function is given by:\\
$\hat{\Phi}_{1l} = \Phi^{-1}(p^{r_l^{1/\theta}}) + \Phi^{-1'}(p^{r_l^{1/\theta}})p^{r_l^{1/\theta}}\ln(p)r_l^{1/\theta-1}\frac{1}{\theta}(y-r_l)$
$=a_{1l}+b_{1l}y$, \\ where $b_{1l}=\Phi^{-1'}(p^{r_l^{1/\theta}})p^{r_l^{1/\theta}}\ln(p)r_l^{1/\theta-1}\frac{1}{\theta}$ and $a_{1l}=\Phi^{-1}(p^{r_l^{1/\theta}})-b_{1l}r_l$. \\
$\hat{\Phi}_1=\displaystyle\max_{l=1,\cdots,N}\hat{\Phi}_{1l}$ is the lower approximation function. 
\begin{theorem}
    \label{thm: 5.1}
Let $\hat{r_i}=(\hat{r}_{i1},\cdots, \hat{r}_{in})$. Together with the approximation of $\Phi^{-1}(p^{y_i^{1/\theta}})$, we have the following approximation of problem (P5), correspondingly:
    \begin{align*}
        (\text{P6})~~\min & ~~q_1\Re(\mu_c^Hz)+q_2\sqrt{\frac{1}{2}\Re(z^H\Gamma_cz+z^HC_c\bar{z})}\\
        \text{s.t. }& \Re(\mu_{A_i}z) + \sqrt{\frac{1}{2}\Re(r_i^H\Gamma_{A_i}r_i+r_i^TC_{A_i}r_i)+\sigma_{b_i}}-\mu_{b_i}\leq0,\forall i\\
        & \Re(r_{ij}) \geq \Re(a_{1l}z_j + b_{1l} m_{ij}),\quad\forall i,j,l\\
        & \Im(r_{ij}) \geq \Im(a_{1l}z_j + b_{1l} m_{ij}),\quad\forall i,j,l\\
        & \sum_{i=1}^m m_{ij}=z_j,\forall j,\quad \Re(z)\geq0,\quad \Im(z)\geq 0,\quad m\geq 0
        \end{align*}
Moreover, the optimal value of this approximation is a lower bound of (P5).
\end{theorem}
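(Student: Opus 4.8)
The plan is to exhibit (P6) as the result of applying to (P5) two constraint-\emph{relaxing} operations—underestimating the quantile function and then linearizing the bilinear terms it produces—so that the feasible region of (P5), projected onto the $z$-variables, is contained in that of (P6). Since both problems minimize the same objective over the same variable $z$, enlarging the feasible set can only lower the optimum, which is exactly the claimed lower-bound property $\mathrm{opt}(P6)\le\mathrm{opt}(P5)$.

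First I would establish the pointwise inequality $\hat\Phi_1(y)\le \Phi^{-1}(p^{y^{1/\theta}})$ on $(0,1]$. Writing $g(y)=\Phi^{-1}(p^{y^{1/\theta}})$, each $\hat\Phi_{1l}$ is by construction the first-order Taylor polynomial of $g$ at the tangent point $r_l$, and a tangent line underestimates a function globally precisely when that function is convex. The crux is therefore to verify $g''\ge 0$ on $(0,1]$, which I would do by the chain rule using $(\Phi^{-1})'(u)=1/\varphi(\Phi^{-1}(u))$ and tracking signs with the standing assumption $p\ge 0.5$ (which forces $g\ge 0$) and the copula parameter range $\theta\ge 1$ used in \cite{cheng2015chance}. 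This is the main analytical obstacle: $g$ is a triple composition of the inverse Gaussian CDF, an exponential, and a power map, so $g''$ assembles several competing terms whose net sign must be controlled. Granting convexity, each tangent underestimates $g$, and hence so does their pointwise maximum $\hat\Phi_1=\max_l\hat\Phi_{1l}$.

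Next I would substitute this underestimate into the constraint of (P5). Because the coefficient $\sqrt{\tfrac12\Re(z^H\Gamma_{A_i}z+z^TC_{A_i}z)+\sigma_{b_i}}$ is nonnegative, replacing $\Phi^{-1}(p^{y_i^{1/\theta}})$ by the smaller $\hat\Phi_1(y_i)$ decreases the left-hand side and thus enlarges the feasible set. It then remains to recast the relaxed constraint in the conic form of (P6). Here I would linearize the bilinear products $y_iz_j$ by setting $m_{ij}=y_iz_j$, so that the normalization $\sum_i y_i=1$ becomes $\sum_i m_{ij}=z_j$ and each active affine piece $(a_{1l}+b_{1l}y_i)z_j=a_{1l}z_j+b_{1l}m_{ij}$ is captured by the epigraph variables $r_{ij}$ through $\Re(r_{ij})\ge\Re(a_{1l}z_j+b_{1l}m_{ij})$ and the analogous imaginary inequality over all $l$, so that $r_i$ encodes $\hat\Phi_1(y_i)z$. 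Folding the product $\hat\Phi_1(y_i)\sqrt{\cdot}$ into a single square root via $\hat\Phi_1(y_i)^2\cdot\tfrac12\Re(z^H\Gamma_{A_i}z+z^TC_{A_i}z)=\tfrac12\Re(r_i^H\Gamma_{A_i}r_i+r_i^TC_{A_i}r_i)$ then produces exactly the second-order-cone constraint of (P6), which is convex by Lemma \ref{lemma 2} and the reasoning already used for (P2)--(P3).

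Finally, I would confirm the direction of each step: both the quantile underestimation and the epigraph relaxation only weaken the constraints, so every $z$ feasible for (P5) extends to a triple $(z,m,r)$ feasible for (P6), giving the feasible-set containment and hence the lower bound. Beyond the convexity of $g$, the point demanding the most care is the constant term $\sigma_{b_i}$, which remains outside the squared multiplier in the folded square root; I would check that under $p\ge 0.5$ the relevant values of $\hat\Phi_1$ keep this substitution on the lower-bound side, thereby closing the argument.
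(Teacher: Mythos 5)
Your argument follows the same route as the paper's proof: convexity of $y\mapsto\Phi^{-1}(p^{y^{1/\theta}})$ for $p\geq 0.5$ makes each tangent $\hat{\Phi}_{1l}$, and hence $\hat{\Phi}_1=\max_l\hat{\Phi}_{1l}$, a global underestimate of the quantile; combined with the nonnegativity of $\sqrt{\tfrac12\Re(z^H\Gamma_{A_i}z+z^TC_{A_i}z)+\sigma_{b_i}}$ this enlarges the feasible set, which yields the lower bound. The paper's proof stops there (it simply cites the convexity as known and states the set inclusion), whereas you go further on two points where the paper is silent, and both are worth the extra care. First, you propose to actually verify $g''\geq 0$ rather than assert it; that is the honest thing to do, though it is standard and can also be referenced from the earlier literature on this approximation. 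Second, and more substantively, your handling of the substitution $m_{ij}=y_iz_j$, $r_i\approx\hat{\Phi}_1(y_i)z$ exposes a real issue the paper never addresses: folding $\hat{\Phi}_1(y_i)$ inside the square root turns the additive term $\sigma_{b_i}$ into $\hat{\Phi}_1(y_i)^2\sigma_{b_i}$, which matches the constraint of (P6) only if $\hat{\Phi}_1(y_i)^2\sigma_{b_i}\geq\sigma_{b_i}$ (so that the (P6) constraint is genuinely a relaxation); this holds when $\hat{\Phi}_1(y_i)\geq 1$ but not in general, so the lower-bound claim for (P6) as written needs either that extra condition or a correction of the constant term. Flagging and resolving that is a genuine improvement over the paper's two-line proof, and you should carry it through rather than leave it as a caveat.
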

\begin{proof}
First, we know that the quantile function $\Phi^{-1}(p^{y^{1/\theta}})$ is convex for all $p\geq 0.5$. Therefore, for any tangent point $r_l, \; l=1,\cdots, N$, 
\begin{align}
    \Phi^{-1}(p^{y^{1/\theta}})\geq \hat{\Phi}_{1} = \max_{l=1}^N \{\hat{\Phi}_{1l}\}.
\end{align}
Since $\sqrt{\frac{1}{2}\Re(z^H\Gamma_{A_i}z+z^TC_{A_i}z)+\sigma_{b_i}}\geq 0$, then $\forall i$ we have:
\begin{align}
    \left\{z:\Re(\mu_{A_i}z) + \Phi^{-1}(p^{y_i^{\frac{1}{\theta}}})\sqrt{\frac{1}{2}\Re(z^H\Gamma_{A_i}z+z^TC_{A_i}z)+\sigma_{b_i}} \leq\mu_{b_i}
    \right\}\\
    \bigsubset [1.7] \left\{z:\Re(\mu_{A_i}z) + \hat{\Phi}_{1}\sqrt{\frac{1}{2}\Re(z^H\Gamma_{A_i}z+z^TC_{A_i}z)+\sigma_{b_i}} \leq \mu_{b_i}\right\}.
\end{align}
\end{proof}
\subsubsection{Upper Bound - Piecewise Linear Approximation}
To construct an upper approximation solution to (P5), we select $N$ interpolation points $r_l,l=1,\cdots, N$ from the interval $(0,1]$, with $r_1<r_2<\cdots<r_N$, and denote $\Phi^{-1}(p^{r_l^{1/\theta}})$ by $\Phi_l$. Let $\hat{\Phi}_{2l}$ be the corresponding piecewise linear approximation of $\Phi^{-1}(p^{y^{1/\theta}})$. We have:
\begin{align}
    \hat{\Phi}_{2l}=\Phi_l + \frac{y-r_l}{r_{l+1}-r_l}(\Phi_{l+1}-\Phi_l)=a_{2l}+b_{2l}y
\end{align}
where $a_{2l}=\cfrac{r_{l+1}\Phi_{l}-r_l\Phi_{l+1}}{r_{l+1}-r_l}$ and $b_{2l}=\cfrac{\Phi_{l+1}-\Phi_l}{r_{l+1}-r_l}$. Defined $\hat{\Phi}_2=\displaystyle\max_{l=1,\cdots,N}\hat{\Phi}_{2l}$ is the upper approximation function.
\begin{theorem}
\label{thm: 5.2}
    Let $\hat{r_i}=(\hat{r}_{i1},\cdots, \hat{r}_{i,n})$. Together with the approximation of $\Phi^{-1}(p^{y_i^{1/\theta}})$, we have the following approximation of problem (P5), correspondingly:
    \begin{align*}
        (\text{P}7)~~\min & ~~q_1\Re(\mu_c^Hz)+q_2\sqrt{\frac{1}{2}\Re(z^H\Gamma_cz+z^HC_c\bar{z})}\\
        \text{s.t. }& \Re(\mu_{A_i}z) + \sqrt{\frac{1}{2}\Re(r_i^H\Gamma_{A_i}r_i+r_i^TC_{A_i}r_i)+\sigma_{b_i}}-\mu_{b_i}\leq0,\forall i\\
        & \Re(r_{ij})\geq \Re(a_{2l}z_j + b_{2l} m_{ij}),\quad\forall i,j,l\\
        & \Im(r_{ij})\geq \Im(a_{2l}z_j + b_{2l} m_{ij}),\quad\forall i,j,l\\
        & \sum_{i=1}^m m_{ij}=z_j,\forall j,\quad\Re(z)\geq0,\quad \Im(z)\geq 0,\quad m\geq 0
        \end{align*}
Moreover, if $p^{\sum y_i^{*\frac{1}{\theta}}}\leq\Phi\left(
        \frac{-\mu_{A_iz-b_i}}{\sigma_{A_iz-b_i}}\right)$, then the optimal value of this approximation is an upper bound of (P6).
\end{theorem}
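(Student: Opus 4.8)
The plan is to mirror the structure of the lower-bound proof (Theorem \ref{thm: 5.1}) but reverse the direction of the inequalities, since a piecewise-linear interpolation of a convex function lies \emph{above} the function on each subinterval. First I would record the key convexity fact used throughout: for $p\geq 0.5$ the quantile function $\Phi^{-1}(p^{y^{1/\theta}})$ is convex in $y$ on $(0,1]$. Because a chord (secant line) of a convex function dominates the function between its two endpoints, each piecewise-linear segment $\hat{\Phi}_{2l}=a_{2l}+b_{2l}y$ satisfies $\hat{\Phi}_{2l}\geq \Phi^{-1}(p^{y^{1/\theta}})$ for $y\in[r_l,r_{l+1}]$, and hence the upper envelope $\hat{\Phi}_2=\max_{l}\hat{\Phi}_{2l}$ provides a pointwise upper bound on the true quantile over the relevant range. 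This is the exact dual of the tangent-line lower bound, so the algebra defining $a_{2l},b_{2l}$ need not be reproduced.

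Next I would translate this pointwise bound into a set inclusion on the feasible regions, again paralleling the lower-bound argument. Since the radical $\sqrt{\tfrac12\Re(z^H\Gamma_{A_i}z+z^TC_{A_i}z)+\sigma_{b_i}}\geq 0$, replacing the true quantile $\Phi^{-1}(p^{y_i^{1/\theta}})$ by the larger $\hat{\Phi}_2$ makes the left-hand side of each constraint larger, so the set of $z$ satisfying the $\hat{\Phi}_2$-constraint is \emph{contained} in the set satisfying the original (P5) constraint. Containment of feasible sets means (P7) optimizes over a subset of the feasible region of (P5); minimizing the same objective over a smaller set yields a value no smaller than the (P5) optimum, establishing the upper-bound property. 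I would then note that the auxiliary variables $r_{ij}$, $m_{ij}$ and the splitting constraints $\sum_i m_{ij}=z_j$ linearize the $\max_l$ envelope in exactly the manner already introduced for (P6), so that (P7) is a genuine SOCP.

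The delicate part is the conditional hypothesis $p^{\sum_i y_i^{*\,1/\theta}}\leq \Phi\!\left(\frac{-\mu_{A_iz-b_i}}{\sigma_{A_iz-b_i}}\right)$, which is precisely the qualification ensuring the secant approximation is used only on the interval $[r_l,r_{l+1}]$ where the chord genuinely dominates the convex function; outside this range a chord may fall \emph{below} the curve, breaking the inclusion. I would argue that this condition guarantees the optimal $y^*$ lies within the interpolation grid $(r_1,r_N]$, so that for the active constraint the relevant segment is one whose endpoints bracket $y_i^*$, validating $\hat{\Phi}_2\geq\Phi^{-1}$ at the optimum. I expect the main obstacle to be making this bracketing argument rigorous: one must verify that the stated inequality forces each optimal component $y_i^*$ into the domain where the chord bound holds, and then connect the per-segment dominance to the global envelope $\hat{\Phi}_2$ at the optimizer — the remainder of the proof is a direct transcription of the set-inclusion reasoning from Theorem \ref{thm: 5.1} with the inequality reversed.
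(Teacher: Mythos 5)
Your proposal is correct and follows essentially the same route as the paper, whose proof of this theorem is literally a one-line reference back to Theorem \ref{thm: 5.1}: you reuse the convexity of $y\mapsto\Phi^{-1}(p^{y^{1/\theta}})$ for $p\geq 0.5$, replace the tangent-line lower bound by the chord upper bound on each subinterval, and reverse the resulting set inclusion so that (P7)'s feasible region is contained in that of (P5), giving an optimal value that is an upper bound. Your discussion of the qualification $p^{\sum_i y_i^{*\,1/\theta}}\leq \Phi\bigl(-\mu_{A_iz-b_i}/\sigma_{A_iz-b_i}\bigr)$ as a bracketing condition keeping $y^*$ inside the interpolation grid is more explicit than anything in the paper, which leaves that hypothesis unexplained.
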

\begin{proof}
    The same as proof Theorem \ref{thm: 5.1}.
\end{proof}
\section{Simulation Experiments} \label{sec: Section 8}

The implementation has been done within Python environment, our optimization problem was solved using CVXPY.
\subsubsection{Adaptive Beamforming with Mismatch Problem}
Our formalism has been applied for adaptive Beamforming using MVDR \cite{cox1987robust}. Assume that we have several sensors $N$, the output signal of the narrowband beamformer is $y(t)=w^Hx(t)$, where $t$ is the sample index,  $w\in\mathbb{C}^M$  beamformer weight coefficients, and $x(t)=S(t)+v(t)$ is the snapshot vector of array observations. where $S(t)=s(t)a_s$ and $v(t)$ are the desired signal and the interference-plus-noise components of $x(t)$, respectively, $s(t)$ and $a_s$ are the desired signal waveform and its steering vector (spatial signature). The optimal weight vector can be obtained by computing the maximum of the SINR function:
\begin{align}
    \text{SINR} = \frac{w^HR_sw}{w^HR_{i+n}w}\label{sinr}
\end{align}
$R_s, R_{i+n}$ are the signal's covariance and interference and noise's covariance. Hence, the maximization of (\ref{sinr}) is equivalent to \cite{kim2008robust}:
\begin{equation}
    \min w^HR_{i+n}w\quad\text{s.t. } \quad w^Ha_s=1
\end{equation}
where the optimal solution $w^*=R_{i+n}^{-1}a/a^HR_{i+n}^{-1}a$ with SINR$=\sigma_s^2a^HR_{i+n}^{-1}a$. In practice, the steering vector is estimated to have errors. The key idea of the beamformer developed in \cite{vorobyov2003robust} is to explicitly model the steering vector uncertainty as $\tilde{a}_s=\delta+a_s\neq a_s$ where $\tilde{a}_s$ and $a_s$ are the actual and presumed signal steering vectors, respectively. 
In \cite{vorobyov2008relationship}, Vorobyov et al. reformulated the problem as a probability-constrained problem and converted it to a deterministic one. In this work, we solved the probability-constrained problem using our formalism to optimize $w$. The CCCP problem is then:
\begin{align}
        \min & ~~w^H{R_{i+n}}w \label{eq 20}\\
        \text{s.t. } &\mathbb{P}[-\Re(\delta^Hw)\leq\Re(a^Hw) -1]\geq p, \quad \Re(a^Hw)\geq0,~~ \Im(a^Hw)=0 \label{eq 21}
\end{align}
with $\delta$ is circular i.i.d. zero-mean complex Gaussian. Using lemma (\ref{lemma 1}) and theorem (7.1), we have the following equivalent deterministic problem:
\begin{align}
        \min & ~~w^HR_{i+n}w \\
        \text{s.t. } & \Phi^{-1}(p)\|C_{\delta}^{1/2}w\|/2\leq \Re(a^Hw) -1,\quad \Re(a^Hw)\geq0,~~ \Im(a^Hw)=0
\end{align}
\begin{figure}[ht]
    \centering
    \includegraphics[width=0.9\linewidth]{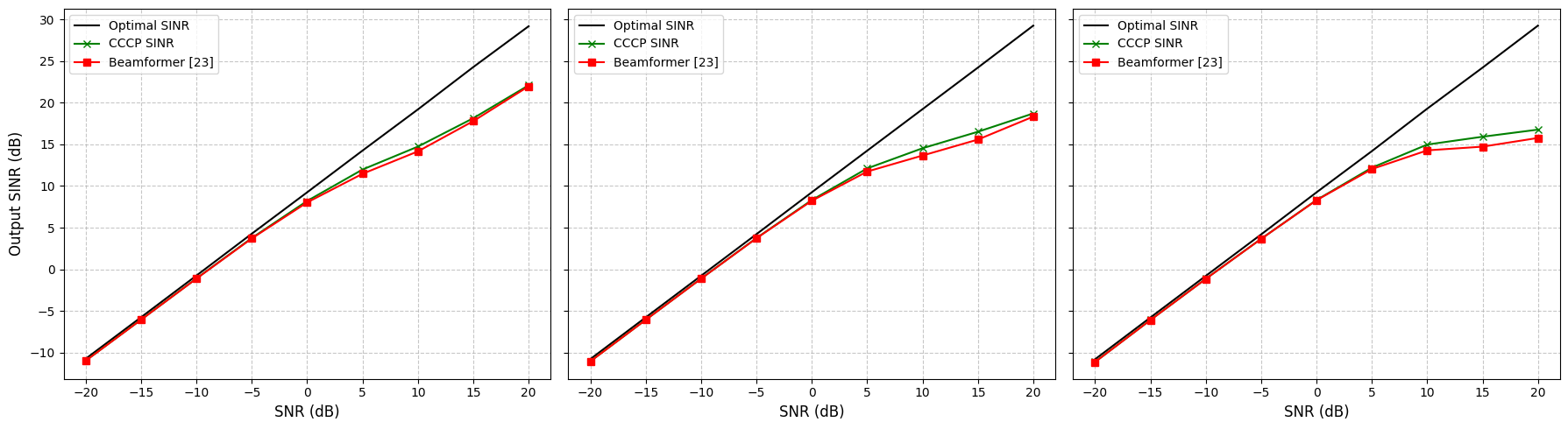}
    \caption{SINR versus SNR for INR = 5 (left), INR = 20 (middle), and INR = 40 (right).}    \label{fig: SINR vs SNR}
\end{figure}
The simulation parameters are configured as in \cite{vorobyov2008relationship}, the number of sensors \(M=8\), the number of samples \(K=100\), the spacing between array elements \(d=0.5\) wavelengths, and the probability \(p=0.95\). The Direction of Arrival (DOA) of the desired signal is \(\theta_s = 3^\circ\). Two interference signals have DOAs of \(30^\circ\) and \(50^\circ\). The perturbation variance is set as \(\sigma_\delta^2 = 0.3M\) with covariance \(Cov_\delta = \sigma_\delta^2 / M I_M\). The noise power is \(\sigma_n^2 = 1\), and 200 simulation runs.

Figure \ref{fig: SINR vs SNR} illustrates the output SINRs of our approach and the method presented in \cite{vorobyov2008relationship} as a function of the signal-to-noise ratio (SNR). The results demonstrate that our approach significantly outperforms the tested beamformers in \cite{vorobyov2008relationship}. Moreover, our approach solves the problem with any complex Gaussian distribution with independence between the real and imaginary parts. In contrast, prior works assume that the uncertainty in the steering vector follows an i.i.d. zero-mean Gaussian distribution. Furthermore, our approach accommodates multiple received signal scenarios.
\subsubsection{Comparisons between individual and joint CCCP}
Another constraint has been imposed on the problem (\ref{eq 20},\ref{eq 21}) which is $\Re(\delta_i w)+\Re(a_i^Hw)\leq \alpha$, with the same parameters as before, to suppress the interfering signal that might come from a strong source the problem was solved as Joint and Individual CCCP to compare between them, as expected and depicted in figure \ref{fig:enter-label}, joint CCCP outperforms the Individual one, as in the latter, you might sample from risk region.
\begin{figure}[ht]
    \centering
    \includegraphics[width=0.5\linewidth]{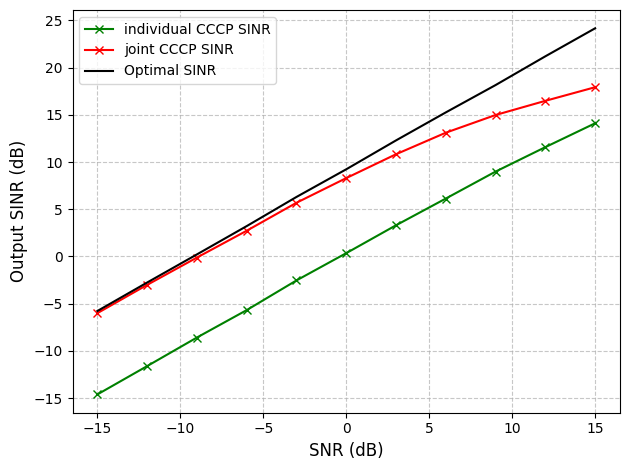}
    \caption{Output SINR with INR = $20$, $100$ runs and $\alpha = 0.7$. Problem was solved as an Individual CCCP and Joint CCCP.}
    \label{fig:enter-label}
\end{figure}

\section{Conclusion}\label{sec: Section 9}
This paper presents a novel approach to optimizing real-world applications involving complex variables through complex chance-constrained optimization programming (CCCP). Our work focuses on handling random experiments with specific confidence levels on constraints. We addressed a special case of complex optimization, linear programming with complex random variables following a complex normal distribution. We reformulated the problem to a deterministic one, resulting in a deterministic SOCP for individual CCCP. For joint CCCP, we formulated a deterministic problem involving the product of two inherently convex functions, using Taylor Series and Piecewise tangent approximations. The optimal values obtained serve as a tight gap between the lower and upper bounds for the original problem. Experimentation shows joint CCCP outperforms individual CCCP, providing enhanced accuracy and speed for many applications.


%
%
%
\bibliographystyle{splncs}

\begin{thebibliography}{99}

\bibitem{dantzig1954solution}
G. Dantzig, R. Fulkerson, and S. Johnson,
``Solution of a large-scale traveling-salesman problem,''
\emph{Journal of the Operations Research Society of America}, vol. 2, 1954, pp. 393--410.

\bibitem{charnes1959chance}
A. Charnes and W. W. Cooper,
``Chance-constrained programming,''
\emph{Management Science}, 1959.

\bibitem{ma2012chance}
S. Ma and D. Sun,
``Chance constrained robust beamforming in cognitive radio networks,''
\emph{IEEE Communications Letters}, vol. 17, 2012, pp. 67--70.

\bibitem{wirtinger1927formalen}
W. Wirtinger,
``Zur formalen Theorie der Funktionen von mehr komplexen Veränderlichen,''
\emph{Mathematische Annalen}, vol. 97, 1927, pp. 357--375.

\bibitem{remmert1991theory}
R. Remmert,
\emph{Theory of Complex Functions}, vol. 122, Springer, 1991.

\bibitem{goodman1963statistical}
N. R. Goodman,
``Statistical analysis based on a certain multivariate complex Gaussian distribution (an introduction),''
\emph{The Annals of Mathematical Statistics}, vol. 34, 1963, pp. 152--177.

\bibitem{picinbono1996second}
B. Picinbono,
``Second-order complex random vectors and normal distributions,''
\emph{IEEE Transactions on Signal Processing}, vol. 44, 1996, pp. 2637--2640.

\bibitem{bellman1966dynamic}
R. Bellman,
``Dynamic programming,''
\emph{Science}, vol. 153, 1966, pp. 34--37.

\bibitem{gidi2023stochastic}
J. Gidi, B. Candia, A. D. Muñoz-Moller, A. Rojas, L. Pereira, M. Muñoz, L. Zambrano, and A. Delgado,
``Stochastic optimization algorithms for quantum applications,''
\emph{Physical Review A}, vol. 108, 2023, p. 032409.

\bibitem{farhi2014quantum}
E. Farhi, J. Goldstone, and S. Gutmann,
``A quantum approximate optimization algorithm,''
\emph{arXiv preprint arXiv:1411.4028}, 2014.

\bibitem{knill2005quantum}
E. Knill,
``Quantum computing with realistically noisy devices,''
\emph{Nature}, vol. 434, 2005, pp. 39--44.

\bibitem{young2013adiabatic}
K. C. Young, R. Blume-Kohout, and D. A. Lidar,
``Adiabatic quantum optimization with the wrong Hamiltonian,''
\emph{Physical Review A – Atomic, Molecular, and Optical Physics}, vol. 88, 2013, p. 062314.

\bibitem{fei2024binary}
X. Fei, L. T. Brady, J. Larson, S. Leyffer, and S. Shen,
``Binary quantum control optimization with uncertain Hamiltonians,''
\emph{INFORMS Journal on Computing}, 2024.

\bibitem{prekopa2013stochastic}
A. Prékopa,
\emph{Stochastic Programming}, vol. 324, Springer, 2013.

\bibitem{henrion2008convexity}
R. Henrion and C. Strugarek,
``Convexity of chance constraints with independent random variables,''
\emph{Computational Optimization and Applications}, vol. 41, 2008, pp. 263--276.

\bibitem{cheng2015chance}
J. Cheng, M. Houda, and A. Lisser,
``Chance constrained 0–1 quadratic programs using copulas,''
\emph{Optimization Letters}, 2015.

\bibitem{cheng2012second}
J. Cheng and A. Lisser,
``A second-order cone programming approach for linear programs with joint probabilistic constraints,''
\emph{Operations Research Letters}, 2012.

\bibitem{ahlfors1979complex}
L. V. Ahlfors,
\emph{Complex Analysis}, McGraw-Hill, New York, 1979.

\bibitem{wooding1956multivariate}
R. A. Wooding,
``The multivariate distribution of complex normal variables,''
\emph{Biometrika}, vol. 43, 1956, pp. 212--215.

\bibitem{cox1987robust}
H. Cox, R. Zeskind, and M. Owen,
``Robust adaptive beamforming,''
\emph{IEEE Transactions on Acoustics, Speech, and Signal Processing}, vol. 35, 1987, pp. 1365--1376.

\bibitem{kim2008robust}
S. J. Kim, A. Magnani, A. Mutapcic, S. P. Boyd, and Z. Q. Luo,
``Robust beamforming via worst-case SINR maximization,''
\emph{IEEE Transactions on Signal Processing}, vol. 56, 2008, pp. 1539--1547.

\bibitem{vorobyov2003robust}
S. Vorobyov, A. Gershman, and Z. Q. Luo,
``Robust adaptive beamforming using worst-case performance optimization: A solution to the signal mismatch problem,''
\emph{IEEE Transactions on Signal Processing}, vol. 51, 2003, pp. 313--324.

\bibitem{vorobyov2008relationship}
S. A. Vorobyov, H. Chen, and A. B. Gershman,
``On the relationship between robust minimum variance beamformers with probabilistic and worst-case distortionless response constraints,''
\emph{IEEE Transactions on Signal Processing}, vol. 56, 2008, pp. 5719--5724.

\end{thebibliography}

\end{document}